\def\mapsto{\DOTSB\mathchar"39AD }
\definecolor{green}{HTML}{2ECC71}
\definecolor{blue}{HTML}{3498DB}
\definecolor{red}{HTML}{E74C3C}
\definecolor{orange}{HTML}{FD6A02}
\def\@endtheorem{\endtrivlist}
\Crefname{equation}{}{}
\Crefname{enumi}{}{}
\Crefname{conditioni}{Condition}{Conditions}
\Crefname{conditionalti}{Condition}{Conditions}
\newtheorem{theorem}{Theorem}[section]
\newtheorem*{theorem*}{Theorem}
\Crefname{theorem}{Theorem}{Theorems}
\newtheorem{lemma}[theorem]{Lemma}
\Crefname{lemma}{Lemma}{Lemmas}
\newtheorem{proposition}[theorem]{Proposition}
\Crefname{proposition}{Proposition}{Propositions}
\Crefname{corollary}{Corollary}{Corollaries}
\newtheorem{conjecture}[theorem]{Conjecture}
\Crefname{conjecture}{Conjecture}{Conjectures}
\Crefname{assumption}{Assumption}{Assumptions}
\theoremstyle{definition}
\newtheorem{definition}[theorem]{Definition}
\Crefname{definition}{Definition}{Definitions}
\Crefname{question}{Question}{Questions}
\theoremstyle{remark}
\newtheorem{remark}[theorem]{Remark}
\Crefname{remark}{Remark}{Remarks}
\Crefname{example}{Example}{Examples}
\newtheorem*{example*}{Example}
\numberwithin{equation}{section}
\DeclarePairedDelimiter{\paren}{\lparen}{\rparen}
\DeclarePairedDelimiter{\set}{\lbrace}{\rbrace}
\DeclarePairedDelimiter{\abs}{\lvert}{\rvert}
\DeclarePairedDelimiterXPP{\Exp}[1]{\exp}{\lparen}{\rparen}{}{#1}
\DeclarePairedDelimiterXPP{\Log}[1]{\log}{\lparen}{\rparen}{}{#1}
\DeclarePairedDelimiterXPP{\Inf}[1]{\inf}{\lbrace}{\rbrace}{}{#1}
\DeclarePairedDelimiterXPP{\Sup}[1]{\sup}{\lbrace}{\rbrace}{}{#1}
\DeclareMathOperator{\esp}{\mathbb{E}}
\DeclarePairedDelimiterXPP{\Esp}[1]{\esp}[]{}{#1}
\author[a]{Ronan HERRY\thanks{R.H gratefully acknowledges funding from the Centre Henri Lebesgue.}}
\author[b]{Dominique MALICET}
\author[a]{Guillaume POLY}
\affil[a]{IRMAR, Université de Rennes 1}
\affil[b]{LAMA, Université Gustave Eiffel}
\begin{document}
\title{A short proof of the strong three dimensional Gaussian product inequality}
 \maketitle
 \vspace{-2em}
\begin{abstract}
  We prove the strong form of the Gaussian product conjecture in dimension three.
  Our purely analytical proof simplifies previously known proofs based on combinatorial methods or computer-assisted methods, and allows us to solve the case of any triple of even positive integers which remained open so far.
\end{abstract}

\section{Introduction and main result}

In this note, we prove the following theorem.
\begin{theorem}\label{t:dim3}
  Let $(X_{1}, X_{2}, X_{3})$ be centered real Gaussian vector, and $p_{1}, p_{2}, p_{3} \in 2 \mathbb{N}$.
  Then,
  \begin{equation}\label{e:dim3}
    \Esp*{ X_{1}^{p_{1}} X_{2}^{p_{2}} X_{3}^{p_{3}}} \geq \Esp*{ X_{1}^{p_{1}}} \Esp*{ X_{2}^{p_{2}}} \Esp*{ X_{3}^{p_{3}}},
  \end{equation}
  with equality if and only if $X_{1}, X_{2}, X_{3}$ are independent.
\end{theorem}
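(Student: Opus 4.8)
The plan is the following. Since both sides of~\eqref{e:dim3} are multiplicative under $X_i\mapsto\lambda_iX_i$, we may assume $\mathbb{E}[X_i^2]=1$, so the law of $(X_1,X_2,X_3)$ is encoded by a positive semidefinite correlation matrix with off‑diagonal entries $\rho_{12},\rho_{13},\rho_{23}\in[-1,1]$; write $p_i=2n_i$. Expanding $x^{2n_i}$ in Hermite polynomials (all coefficients nonnegative) and applying the Isserlis--Wick formula for $\mathbb{E}[H_a(X_1)H_b(X_2)H_c(X_3)]$ gives a finite expansion
\[
  \mathbb{E}\bigl[X_1^{p_1}X_2^{p_2}X_3^{p_3}\bigr]\;=\;\sum_{\substack{a,b,c\ge 0\\ a\equiv b\equiv c\ (\mathrm{mod}\ 2)}}C_{a,b,c}\,\rho_{12}^{\,a}\rho_{13}^{\,b}\rho_{23}^{\,c},
\]
with every $C_{a,b,c}\ge 0$, whose constant term $C_{0,0,0}=\prod_i\mathbb{E}[X_i^{p_i}]$ is exactly the right-hand side of~\eqref{e:dim3}, and with $C_{2,0,0}=\tfrac{p_1p_2}{2}C_{0,0,0}$, $C_{0,2,0}=\tfrac{p_1p_3}{2}C_{0,0,0}$, $C_{0,0,2}=\tfrac{p_2p_3}{2}C_{0,0,0}$. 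Replacing $X_i$ by $-X_i$ negates the two correlations carrying the index $i$; these moves act as the Klein four-group on $(\operatorname{sgn}\rho_{12},\operatorname{sgn}\rho_{13},\operatorname{sgn}\rho_{23})$, so after relabeling we may assume that either (a)~all $\rho_{ij}\ge 0$, or (b)~$\rho_{12},\rho_{13}\ge 0\ge\rho_{23}$. In case~(a) every term of the expansion is $\ge 0$, and equality forces $C_{2,0,0}\rho_{12}^2=C_{0,2,0}\rho_{13}^2=C_{0,0,2}\rho_{23}^2=0$, hence all $\rho_{ij}=0$, i.e.\ independence. Positive semidefiniteness gives $\rho_{12}^2+\rho_{13}^2+\rho_{23}^2-2\rho_{12}\rho_{13}\rho_{23}\le 1$; in case~(b) this forces $\rho_{12}^2+\rho_{13}^2\le 1$ (otherwise $\rho_{23}\ge\rho_{12}\rho_{13}-\sqrt{(1-\rho_{12}^2)(1-\rho_{13}^2)}\ge 0$ and we are back in case~(a)). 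It thus remains to treat case~(b) with $\rho_{12}^2+\rho_{13}^2\le 1$, where $\rho_{23}$ ranges over $I=\bigl[\rho_{12}\rho_{13}-\sqrt{(1-\rho_{12}^2)(1-\rho_{13}^2)},\,0\bigr]$.

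\textbf{Induction and convexity in $\rho_{23}$.}
We argue by induction on $n_1+n_2+n_3$. The base case $(p_1,p_2,p_3)=(2,2,2)$ is done by hand: conditioning on $X_1$ reduces~\eqref{e:dim3} to $\rho_{12}^2+\rho_{13}^2+\rho_{23}^2+4\rho_{12}\rho_{13}\rho_{23}\ge 0$, which, since $\rho_{23}\le 0$ and $\rho_{12}\rho_{13}\le\tfrac12$ (from the previous paragraph), holds because the quadratic $|\rho_{23}|^2-4\rho_{12}\rho_{13}|\rho_{23}|+(\rho_{12}^2+\rho_{13}^2)\ge|\rho_{23}|^2-4\rho_{12}\rho_{13}|\rho_{23}|+2\rho_{12}\rho_{13}$ has nonpositive discriminant. (The two-variable inequality $\mathbb{E}[X^{2a}Y^{2b}]\ge\mathbb{E}[X^{2a}]\mathbb{E}[Y^{2b}]$, immediate from the Wick expansion since only even powers of the correlation occur, is used freely.) For the inductive step, the structural fact we exploit is that $\rho_{23}\mapsto\mathbb{E}\bigl[X_1^{p_1}X_2^{p_2}X_3^{p_3}\bigr]$ is \emph{convex}, since its second derivative equals $p_2(p_2-1)p_3(p_3-1)\,\mathbb{E}[X_1^{p_1}X_2^{p_2-2}X_3^{p_3-2}]\ge 0$ (all exponents even). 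Hence the minimum of $\rho_{23}\mapsto\mathbb{E}[X_1^{p_1}X_2^{p_2}X_3^{p_3}]$ over $I$ is attained at an endpoint of $I$ or at the interior critical point $u^\ast$ characterized by $\mathbb{E}[X_1^{p_1}X_2^{p_2-1}X_3^{p_3-1}]=0$, and it suffices to bound $\mathbb{E}[X_1^{p_1}X_2^{p_2}X_3^{p_3}]$ from below at those three points.

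\textbf{The endpoints, and the critical point.}
At $\rho_{23}=0$ the expansion of the first paragraph retains only even powers of $\rho_{12},\rho_{13}$, so $\mathbb{E}[X_1^{p_1}X_2^{p_2}X_3^{p_3}]=\sum_{a,b\ \mathrm{even}}C_{a,b,0}\rho_{12}^a\rho_{13}^b\ge C_{0,0,0}$, with equality forcing $\rho_{12}=\rho_{13}=0$. At the left endpoint the correlation matrix is singular, so $X_3=\alpha X_1+\beta X_2$ with $\alpha=\tfrac{\rho_{13}-\rho_{12}\rho_{23}}{1-\rho_{12}^2}\ge 0$, $\beta=\tfrac{\rho_{23}-\rho_{12}\rho_{13}}{1-\rho_{12}^2}\le 0$ (the case $\rho_{12}=1$ being two-dimensional); expanding $(\alpha X_1+\beta X_2)^{p_3}$, using the two-variable inequality and $\mathbb{E}[X_i^{p_i+k}]\ge(k-1)!!\,\mathbb{E}[X_i^{p_i}]$ for even $k$, one gets $\mathbb{E}[X_1^{p_1}X_2^{p_2}X_3^{p_3}]\ge(\alpha^2+\beta^2)^{p_3/2}\prod_i\mathbb{E}[X_i^{p_i}]$, and $\alpha^2+\beta^2=1-2\alpha\beta\rho_{12}\ge 1$ closes this endpoint (strictly unless $\alpha\beta=0$, which is seen to force independence). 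Finally, at $\rho_{23}=u^\ast$, Gaussian integration by parts in $X_2$ gives
\[
  \mathbb{E}\bigl[X_1^{p_1}X_2^{p_2}X_3^{p_3}\bigr]\;=\;p_1\rho_{12}\,\mathbb{E}\bigl[X_1^{p_1-1}X_2^{p_2-1}X_3^{p_3}\bigr]+(p_2-1)\,\mathbb{E}\bigl[X_1^{p_1}X_2^{p_2-2}X_3^{p_3}\bigr],
\]
the $\rho_{23}$-term vanishing by the definition of $u^\ast$; the second summand is $\ge\prod_i\mathbb{E}[X_i^{p_i}]$ by the inductive hypothesis, so it remains only to show the first summand is $\ge 0$ — equivalently, using the same identity with integration by parts in $X_3$, that at least one of $\mathbb{E}[X_1^{p_1-1}X_2^{p_2-1}X_3^{p_3}]$ and $\mathbb{E}[X_1^{p_1-1}X_2^{p_2}X_3^{p_3-1}]$ is $\ge 0$ at $\rho_{23}=u^\ast$. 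Tracking equality through all three cases then shows that equality in~\eqref{e:dim3} forces every $\rho_{ij}=0$, i.e.\ independence; the converse is immediate.

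\textbf{Main obstacle.}
The one genuinely delicate step is the last one: controlling the signs of the two "leftover" odd moments $\mathbb{E}[X_1^{p_1-1}X_2^{p_2-1}X_3^{p_3}]$ and $\mathbb{E}[X_1^{p_1-1}X_2^{p_2}X_3^{p_3-1}]$ at the critical correlation $\rho_{23}=u^\ast$. Neither has a fixed sign in general, and one must combine the positive-semidefiniteness bound $\rho_{12}^2+\rho_{13}^2\le 1$ with the defining relation $\mathbb{E}[X_1^{p_1}X_2^{p_2-1}X_3^{p_3-1}]=0$ of $u^\ast$ — e.g.\ reducing these moments once more by conditioning on $X_3$ or by a further integration by parts — to show the two expectations cannot be simultaneously negative while $u^\ast\in I$. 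Everything else (normalization, the sign reduction, the base case, the two endpoints) is elementary, and this is the point where three-dimensionality, i.e.\ the geometry of the $3\times 3$ positive semidefinite cone, enters essentially and makes the argument go through for an arbitrary even triple rather than only for $(2,2,2n)$.
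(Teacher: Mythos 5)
Your proposal does not close: the step you yourself flag as the \enquote{main obstacle} is precisely the mathematical content of the theorem, and no argument is offered for it. Because you fix $\rho_{12},\rho_{13}$ and minimize only over $\rho_{23}$, at the interior critical point $u^{\ast}$ you obtain the single relation $\Esp{X_1^{p_1}X_2^{p_2-1}X_3^{p_3-1}}=0$, and the integration by parts leaves the uncontrolled term $p_1\rho_{12}\Esp{X_1^{p_1-1}X_2^{p_2-1}X_3^{p_3}}$ (or its mirror image after integrating by parts in $X_3$), whose sign is genuinely unclear; showing that the two leftover odd moments cannot be simultaneously negative is not a routine verification, and nothing in your sketch supplies it. The way the paper avoids this is to minimize $\Phi$ over \emph{all three} off-diagonal entries at once on the compact set $\mathcal{C}$ of admissible correlation matrices: at an interior minimizer all three partial derivatives vanish, i.e.\ $\Esp{X_iU}=0$ for $i=1,2,3$ with $U=X_1^{p_1-1}X_2^{p_2-1}X_3^{p_3-1}$, so the identity \cref{ipp} collapses to $\Phi(\Sigma_0)=(p_1-1)\Esp{X_1^{p_1-2}X_2^{p_2}X_3^{p_3}}$ with no leftover cross terms and the induction closes immediately; the boundary of $\mathcal{C}$ is then dispatched by a Lagrange-multiplier argument on $\set{\det=0}$ (where the kernel relation $\alpha_1X_1+\alpha_2X_2+\alpha_3X_3=0$ forces a contradiction) and by the two-dimensional case when some $\abs{\rho_{ij}}=1$. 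Your one-variable criticality is simply too weak a piece of information, and this is exactly where the proof has to happen.

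There is also a secondary gap at your left endpoint: expanding $\Esp{X_1^{p_1}X_2^{p_2}(\alpha X_1+\beta X_2)^{p_3}}$ binomially with $\alpha\ge 0\ge\beta$, the odd-$k$ terms $\binom{p_3}{k}\alpha^k\beta^{p_3-k}\Esp{X_1^{p_1+k}X_2^{p_2+p_3-k}}$ are nonpositive (the expectation is a polynomial in $\rho_{12}\ge 0$ with nonnegative coefficients, while the prefactor is $\le 0$), so they cannot be discarded to reach the claimed bound $(\alpha^2+\beta^2)^{p_3/2}\prod_i\Esp{X_i^{p_i}}$; an additional absorption argument is needed. The reductions in your first paragraph (normalization, Hermite positivity, the Klein-four sign reduction, the constraint $\rho_{12}^2+\rho_{13}^2\le 1$), the convexity in $\rho_{23}$, and the $\rho_{23}=0$ endpoint are all correct, but as it stands the proof does not go through.
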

Hence, our result completely solves the case $n =3$ of the strong form of the celebrated \emph{Gaussian product conjecture}.
For short, let us introduce the following notation.
\begin{definition}
  We say that $n \in \mathbb{N}$, and $p_{1}, \dots, p_{n} \in (0,\infty)$ satisfy the \emph{Gaussian product inequality}, and we write $\mathbf{GPI}_{n}(p_{1}, \dots, p_{n})$ provided for all real centered Gaussian vector $(X_{1}, \dots, X_{n})$:
  \begin{equation*}
    \Esp*{ \prod_{i=1}^{n} \abs{X_{i}}^{p_{i}} } \geq \prod_{i=1}^{n} \Esp*{ \abs{X_{i}}^{p_{i}}},
  \end{equation*}
  with equality if and only if $X_{1}, \dots, X_{n}$ are independent.
\end{definition}

\begin{conjecture}\label{gpc}
  For all $n \in \mathbb{N}$, and all $p_{1}, \dots, p_{n} \in 2 \mathbb{N}$, $\mathbf{GPI}_{n}(p_{1}, \dots, p_{n})$ holds.
\end{conjecture}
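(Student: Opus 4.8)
The plan is to treat \cref{gpc} as a positivity statement about a single Gaussian moment regarded as a polynomial in the correlations, and to try to build the general case on top of \cref{t:dim3}. Normalising so that each $X_i$ has unit variance, write $R = (\rho_{ij})$ for the correlation matrix and set $\Phi(R) = \Esp*{\prod_{i=1}^{n} X_i^{p_i}}$, a polynomial in the off-diagonal entries. Isserlis' (Wick's) theorem expands $\Phi(R)$ as a sum over pairings of the $\sum_i p_i$ legs, and the right-hand side $\prod_i \Esp*{X_i^{p_i}} = \prod_i (p_i-1)!!$ is precisely the contribution of the pairings that never connect two distinct variables. Thus $\mathbf{GPI}_n(p_1,\dots,p_n)$ is equivalent to saying that the remaining \emph{cross} pairings contribute a nonnegative total, strictly positive unless $R = I$. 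A first reduction comes from the symmetry $X_i \mapsto \varepsilon_i X_i$ with $\varepsilon_i \in \{\pm 1\}$, which leaves $\Phi$ invariant because every $p_i$ is even; hence $\Phi(R) = \Phi(DRD)$ for any diagonal sign matrix $D$, and whenever the signs of the $\rho_{ij}$ can be simultaneously flipped to be nonnegative, every pairing contributes a nonnegative term and the inequality is immediate. The content of the conjecture therefore lies entirely in the \emph{frustrated} (unbalanced) sign patterns.

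Two inputs are available as building blocks. The fully quadratic case $p_1 = \dots = p_n = 2$ is Frenkel's hafnian inequality, but it is too weak to bootstrap: writing $X_i^{p_i} = \prod_{j=1}^{p_i/2} X_i^2$ and applying it to the degenerate vector of perfectly correlated copies yields only $\Phi(R) \geq \prod_i \sigma_i^{p_i}$, which misses the combinatorial factor $\prod_i (p_i-1)!!$ that is the whole point for higher powers. The second input is \cref{t:dim3}, which settles $n = 3$ for all even exponents by analytic means. The strategy I would pursue is an induction on $n$ anchored at $n \leq 3$: conditioning on $X_n$ expresses $X_1,\dots,X_{n-1}$ as centred Gaussians shifted by a linear function of $X_n$, so that $\Phi(R)$ becomes a Gaussian average over $X_n$ of moments of a \emph{non-centred} $(n-1)$-variable vector; one then expands these shifted moments and attempts to dominate the mean-dependent corrections using the inductive hypothesis together with the one-dimensional even-moment estimates.

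The cleanest analytic handle is the Gaussian interpolation identity. Moving along the segment $C(t) = (1-t)I + tR$, which stays inside the cone of correlation matrices, one gets
\begin{equation*}
  \frac{d}{dt}\,\Phi\paren*{C(t)} = \frac12 \sum_{i \neq j} \rho_{ij}\, p_i p_j\, \Esp*{X_i^{p_i-1} X_j^{p_j-1} \prod_{k \neq i,j} X_k^{p_k}},
\end{equation*}
where the moments are taken under the law with covariance $C(t)$, so that $\Phi(R) - \Phi(I)$ is an integral of such mixed moments. The essential obstacle is that these moments are odd in $X_i$ and $X_j$ and carry no fixed sign, while each $\rho_{ij}$ may have either sign, so no term-by-term comparison survives along this — or any obvious — interpolation path. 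This is exactly the point at which the three-dimensional analytic argument must be upgraded: one needs either an integral or sum-of-squares representation of $\Phi(R) - \Phi(I)$ that is manifestly nonnegative and stable as $n$ grows, or an inductive mechanism converting control of frustrated triangles into control of arbitrary frustrated patterns. Producing such a representation \emph{uniformly} in $n$, rather than verifying positivity dimension by dimension, is, I expect, the main difficulty, and is why the statement is recorded here as a conjecture rather than a theorem.
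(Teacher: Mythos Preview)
The statement you were asked to prove is labelled \emph{Conjecture} in the paper, and the paper does not prove it: the paper establishes only the three-dimensional case \cref{t:dim3} and records the general $\mathbf{GPI}_{n}$ as open. Your write-up correctly recognises this status and does not pretend to give a proof; it is an informed discussion of possible strategies (sign-balancing via $X_i\mapsto\varepsilon_i X_i$, conditioning on $X_n$, Gaussian interpolation along $C(t)=(1-t)I+tR$) together with an honest identification of the obstruction, namely that the mixed moments appearing in the interpolation derivative are odd in two coordinates and carry no sign. That diagnosis is accurate and matches the reason the problem is open.

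So there is no discrepancy to flag: neither you nor the paper proves the conjecture, and your concluding sentence is exactly right. Two small technical remarks on the sketch itself. First, the sign-balancing reduction is weaker than you suggest: the transformation $R\mapsto DRD$ can flip the signs of the $\rho_{ij}$ only according to a cut of the index set, so ``simultaneously flipping all $\rho_{ij}$ to be nonnegative'' is possible precisely when the signed graph is balanced, which you do note, but this already fails generically for $n\geq 3$ and is therefore not a substantial reduction. Second, the conditioning-on-$X_n$ idea produces non-centred Gaussians, and the inductive hypothesis concerns only centred vectors; bridging that gap would itself require a strengthened inequality (a shifted-moment version of $\mathbf{GPI}_{n-1}$), which is at least as hard as the original. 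These are not errors so much as confirmations of your own conclusion that no straightforward induction from $n=3$ is available.
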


Despite having received considerable attention, the general case of the conjecture had, until now, remained wide open.
The previous state of the art regarding the Gaussian product conjecture was the following. 
\begin{theorem}\label{t:known}
  \begin{enumerate}[(a),wide,nosep]
    The following cases of \cref{gpc} are known.
    \item\label{t:known:dim2} For all $p_{1}, p_{2} \in 2\mathbb{N}$, $\mathbf{GPI}_{2}(p_{1}, p_{2})$.
    \item (\cite{Frenkel}) For all $n \in \mathbb{N}$, $\mathbf{GPI}_{n}(2, 2, \dots, 2)$.
      \item\label{t:known:dim3} (\cite{LCW}) For all $p \in 2 \mathbb{N}$, $\mathbf{GPI}_{3}(p, p, p)$.
      \item\label{t:known:sos} (\cite{RS}) For all $p \in 2 \mathbb{N}$, $\mathbf{GPI}_{3}(p,6,4)$ and $\mathbf{GPI}_{2}(p,2,2,2)$.
    \item (\cite{RS2}) For all $p$ and $q \in 2 \mathbb{N}$, $\mathbf{GPI}_{3}(2,p,q)$.
  \end{enumerate}
\end{theorem}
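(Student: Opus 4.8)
The plan is to prove part~(a) of \cref{t:known} from scratch --- it is the one part stated without attribution --- and simply to record that parts~(b)--(e) are, verbatim, the quoted theorems of \cite{Frenkel}, \cite{LCW}, \cite{RS} and \cite{RS2}, so that nothing further is needed.

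For part~(a), by homogeneity I may assume $X_1$ and $X_2$ have unit variance (the degenerate case where one of them vanishes being trivial); I set $\rho := \mathbb{E}[X_1 X_2] \in [-1,1]$ and view $g(\rho) := \mathbb{E}[X_1^{p_1} X_2^{p_2}]$ as a function of the correlation alone. Two elementary observations do all the work. First, $g$ is \emph{even}: replacing $X_2$ by $-X_2$ sends $\rho$ to $-\rho$ while leaving $X_1^{p_1} X_2^{p_2}$ unchanged, because $p_2$ is even. Second, $g$ is \emph{convex} on $[-1,1]$ and strictly convex on $(-1,1)$: applying the classical identity $\partial_\rho \mathbb{E}[\phi(X_1,X_2)] = \mathbb{E}[\partial_1 \partial_2 \phi(X_1,X_2)]$ (valid for standardised jointly Gaussian $(X_1,X_2)$ of correlation $\rho$) twice to $\phi(x_1,x_2) = x_1^{p_1} x_2^{p_2}$ gives
\[
  g''(\rho) = p_1 p_2 (p_1-1)(p_2-1)\, \mathbb{E}[X_1^{p_1-2} X_2^{p_2-2}],
\]
which is $\geq 0$ since $p_1-2$ and $p_2-2$ are even, and is $>0$ for $|\rho|<1$ because the integrand is then almost surely positive.

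An even, continuous, convex function on $[-1,1]$ has $0$ among its minimisers, and strict convexity on $(-1,1)$ makes $0$ the \emph{unique} minimiser. Since $\rho = 0$ is precisely independence of $X_1$ and $X_2$, one has $g(0) = \mathbb{E}[X_1^{p_1}]\,\mathbb{E}[X_2^{p_2}]$, whence $g(\rho) \geq g(0)$ with equality if and only if $\rho = 0$, i.e.\ if and only if $X_1$ and $X_2$ are independent; this is exactly $\mathbf{GPI}_2(p_1,p_2)$. The identity used above is itself immediate from differentiating under the integral sign and integrating by parts, via $\partial_\rho \psi_\rho = \partial_1 \partial_2 \psi_\rho$ for the centred bivariate Gaussian density $\psi_\rho$; alternatively, expanding $x^p$ in Hermite polynomials and invoking Isserlis' formula yields $g(\rho) = \sum_{k \geq 0} a_{p_1,k}\, a_{p_2,k}\, k!\, \rho^k$ with $a_{p_i,k} \geq 0$ and $a_{p_i,0} = (p_i-1)!!$, and since only even $k$ occur this displays $g(\rho) - g(0)$ as a sum of nonnegative terms that vanishes exactly at $\rho = 0$.

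As for the remainder of \cref{t:known}, parts~(b)--(e) are not reproved: they are verbatim the results of \cite{Frenkel} (all exponents equal to $2$, in every dimension), \cite{LCW} (equal even exponents in dimension three), \cite{RS} (the sum-of-squares cases), and \cite{RS2} ($\mathbf{GPI}_3(2,p,q)$), and \cref{t:known} merely assembles them. There is accordingly no real obstacle here: the only step that calls for an argument is part~(a), and the point of that argument is that symmetry together with one-variable convexity --- a consequence of the elementary Gaussian correlation-differentiation identity --- already pins the minimum of $g$ at $\rho = 0$.
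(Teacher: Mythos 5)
Your proof of part (a) is correct and is essentially the paper's own argument: reduce by homogeneity to unit variances, use the identity $\partial_\rho\Esp{\phi(X_1,X_2)}=\Esp{\partial_1\partial_2\phi(X_1,X_2)}$ twice to get strict convexity of $g$, and locate the minimum at $\rho=0$ (the paper checks $g'(0)=0$ directly via vanishing odd moments rather than invoking evenness, a cosmetic difference). Parts (b)--(e) are indeed just citations in the paper as well, so nothing further is needed.
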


The above results are obtained through sophisticated methods.
In particular, \cite{LCW} relies on a heavily combinatorial approach in connection with the theory of Gaussian hypergeometric functions; while \cite{RS,RS2} is a computer-assisted method based on the SOS algorithm which provides an explicit expansion of a positive multivariate polynomial into a sum of squared quantities.
On the contrary, our approach is purely analytical and combines an optimization procedure through the use of Lagrange multipliers with Gaussian analysis.
Our contribution not only drastically simplifies the proof of the known cases in dimension three (\cref{t:known} \cref{t:known:dim3,t:known:sos}), but it also enables us to fully resolve the three dimensional case, that is to say for every choice of even integer exponents.

\begin{remark}
Let us also mention that, for all $n \in \mathbb{N}$ and $p_{1}, \dots, p_{n} \in 2 \mathbb{N}$, \cite{Arias} establishes the \emph{complex} counterpart of the conjecture; while \cite{MNPP} derives a variant of the inequality involving \emph{Hermite polynomials}.
\cite{Wei} proves $\mathbf{GPI}_{n}(p_{1}, \dots, p_{n})$ for all $n \in \mathbb{N}$ and $p_{1}, \dots, p_{n} \in (-1,0)$.
Several authors also derive weaker form of \cref{gpc} by considering only Gaussian vectors with additional assumptions on the covariance matrix.
Among others, let us quote the two recent contributions: \cite{Royen} for the case of positive correlations, and \cite{Ouimet} for multinomial covariances.
\end{remark}

\section{Proof of the main result}
In the rest of the paper, we fix a probability space $(\Omega,\mathcal{F},\mathbb{P})$ supporting an independent sequence $(G_k)_{k\in\mathbb{N}}$ of centered normalized Gaussian variables on $\Omega$.
In the following, $\Sigma = (\sigma_{i,j})$ is a real symmetric non-negative matrix of size $n$.
To $\Sigma$, we associate a centered Gaussian vector $\vec{X}=(X_1,\ldots,X_n)$ with covariance matrix $\Sigma$ by setting $\vec{X}= \Sigma^{1/2} \vec{G}$ where $\vec{G}=(G_1,\ldots,G_n)$.
Let $p_{1}, \dots, p_{n} \in 2 \mathbb{N}^{*}$, and $h(x_1,\dots,x_n)=x_1^{p_1} \dots x_{n}^{p_{n}}$.
  Our strategy consists in studying the points where the map $\Phi \colon \Sigma \mapsto \Esp{h(X_1,\dots,X_n)}$ reaches its minimum.
  Our argument allows us to characterize those minimal points for $n= 2$ or $3$.
  Using  Wick formula \cite[Thm.\ 1.28]{Janson}, it is readily checked that $\Phi$ is polynomial in the entries of $\Sigma$.
  We shall need the following standard lemma.
  We recall a proof for the sake of self-containedness.
\begin{lemma}\label{lemme-technique}
Let $(X_1,\cdots,X_n)$ be a Gaussian vector that is centred with covariance matrix $\Sigma$ non necessarily invertible.
Then it holds:
\begin{align}
    & \Esp*{X_ih(X_1,\ldots, X_n)}=\sum_{j=1}^n \sigma_{i,j}\Esp*{\partial_jh(X_1,\ldots,X_n)}, \qquad i \in \set{1,\dots, n}\label{ipp} ; \\
    & \frac{\partial}{\partial\sigma_{i,j}}\Esp*{h(X_1,\ldots,X_n)}=\Esp*{\partial_{x_i} \partial_{x_j}h(X_1,\ldots,X_n)}, \qquad i \ne j \in \set{1,\dots, n} \label{derivative}.
  \end{align}
\end{lemma}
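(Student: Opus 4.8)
The two identities are both instances of Gaussian integration by parts, so the plan is to establish \eqref{ipp} first and then deduce \eqref{derivative} by differentiating the covariance parametrisation. For \eqref{ipp}, I would first reduce to the case where $\Sigma$ is invertible: writing $\Sigma_\varepsilon = \Sigma + \varepsilon I$, both sides of \eqref{ipp} are polynomial (hence continuous) in the entries of the covariance matrix by the Wick formula, so it suffices to prove the identity for $\Sigma_\varepsilon$ and let $\varepsilon \to 0$. When $\Sigma$ is invertible, $\vec X$ has the density $c\exp(-\tfrac12 x^\top \Sigma^{-1} x)$, and the key observation is that $\partial_{x_k}\bigl(\exp(-\tfrac12 x^\top \Sigma^{-1}x)\bigr) = -(\Sigma^{-1}x)_k\exp(-\tfrac12 x^\top\Sigma^{-1}x)$. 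Multiplying by $\Sigma$ and summing, one gets $x_i\,\exp(-\tfrac12 x^\top\Sigma^{-1}x) = -\sum_j \sigma_{i,j}\,\partial_{x_j}\bigl(\exp(-\tfrac12 x^\top\Sigma^{-1}x)\bigr)$. Inserting this into $\Esp*{X_i h(\vec X)}$ and integrating by parts in each $x_j$ variable (the boundary terms vanish since $h$ has polynomial growth and the Gaussian density decays super-polynomially) yields exactly the right-hand side of \eqref{ipp}.

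For \eqref{derivative}, I would differentiate the Wick-type formula for $\Phi(\Sigma) = \Esp*{h(\vec X)}$ directly. Using $\vec X = \Sigma^{1/2}\vec G$ is awkward because $\Sigma \mapsto \Sigma^{1/2}$ is not polynomial; instead it is cleaner to observe that $\Esp*{h(\vec X)}$, expanded via the Wick formula, is a polynomial in the $\sigma_{k,\ell}$ in which each monomial is a product of covariances associated to a pairing of the $p_1 + \dots + p_n$ "legs". Differentiating with respect to $\sigma_{i,j}$ (with $i \ne j$) amounts, combinatorially, to selecting one leg attached to variable $i$ and one attached to variable $j$ and removing that pair from the pairing; summing over all such choices reproduces the Wick expansion of $\Esp*{\partial_{x_i}\partial_{x_j} h(\vec X)}$, since $\partial_{x_i}\partial_{x_j}h$ precisely deletes one $x_i$-factor and one $x_j$-factor from $h$. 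Alternatively — and this is probably the shortest route — one can use the heat-equation identity: if $\vec X_t$ has covariance $\Sigma + t(E_{ij}+E_{ji})$ for small $t$, then writing $\vec X_t = \vec X + \sqrt t\,(Z_i e_j + Z_j e_i)$-type couplings and applying \eqref{ipp} twice (once in the $i$ direction, once in the $j$ direction) gives $\tfrac{d}{dt}\Esp*{h(\vec X_t)} = \Esp*{\partial_{x_i}\partial_{x_j}h(\vec X_t)}$, and evaluating at $t=0$ is \eqref{derivative}.

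The main obstacle, such as it is, is bookkeeping rather than conceptual: one must be careful that the formulas are asserted for possibly singular $\Sigma$, so every manipulation that uses the Gaussian density must be performed for the regularised matrix $\Sigma + \varepsilon I$ and then passed to the limit using the polynomiality (equivalently, continuity) of both sides in the entries of $\Sigma$ — this is exactly why the lemma is stated with "$\Sigma$ non necessarily invertible". I would also take a moment to note why the integrability is not an issue: since each $p_k$ is a positive even integer, $h$ and its first and second partial derivatives are polynomials, and polynomials are integrable against any Gaussian measure, so all the expectations appearing are finite and the differentiation under the integral sign in the heat-equation argument is justified by dominated convergence with a polynomial-times-Gaussian dominating function.
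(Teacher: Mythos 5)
Your proof of \eqref{ipp} is essentially the paper's: reduce to invertible $\Sigma$ via polynomiality of both sides in the entries of $\Sigma$ (Wick), then use the density identity $x_i f_\Sigma + \sum_j \sigma_{i,j}\partial_{x_j} f_\Sigma = 0$ and integrate by parts. For \eqref{derivative}, however, you take a genuinely different route. The paper stays at the level of the density: it computes $\partial_{\sigma_{i,j}}\widehat{f_\Sigma} = -x_ix_j\widehat{f_\Sigma} = \widehat{\partial_{x_i}\partial_{x_j}f_\Sigma}$ and concludes $\partial_{\sigma_{i,j}} f_\Sigma = \partial_{x_i}\partial_{x_j} f_\Sigma$ by injectivity of the Fourier transform, after which \eqref{derivative} follows by differentiating under the integral and integrating by parts twice. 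Your primary argument instead differentiates the Wick expansion itself: each monomial is a product of covariances over a pairing of the legs, $\partial_{\sigma_{i,j}}$ deletes one $\{i,j\}$-pair (the multiplicity from differentiating $\sigma_{i,j}^k$ accounting exactly for the choice of which pair to delete), and summing over the $p_ip_j$ choices of an $i$-leg and a $j$-leg reproduces the Wick sum for $\partial_{x_i}\partial_{x_j}h$. This is correct, and it has the small advantage of working directly for singular $\Sigma$ with no regularisation or Fourier inversion needed; the paper's Fourier argument is shorter to write but requires the preliminary reduction to invertible $\Sigma$. Your alternative ``heat-equation'' sketch is the weak point: the perturbation $E_{ij}+E_{ji}$ is not positive semidefinite, so the coupling $\vec X + \sqrt{t}(\cdots)$ as written cannot realise the covariance $\Sigma + t(E_{ij}+E_{ji})$ without also disturbing the diagonal (or without introducing signed/complex auxiliary variables), and it is not needed given that your combinatorial argument already closes the proof.
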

\begin{proof}[{Proof of \cref{lemme-technique}}]
  In view of Wick formula, \cref{ipp,derivative} are equalities involving polynomials in the entries of $\Sigma$.
  It is thus sufficient to establish them for an invertible $\Sigma$.
  In this case, let us write $f_{\Sigma}$ for the density distribution associated with $\vec{X}$.
  A direct computation yields
  \begin{equation*}
    x_{i} f_{\Sigma} + \sum_{j=1}^{n} \sigma_{i,j} \partial_{x_{j}} f_{\Sigma} = 0, \qquad i \in \set{1,\dots,n}.
  \end{equation*}
\cref{ipp} readily follows.
In order to prove \cref{derivative}, consider the Fourier transform of $f_{\Sigma}$:
\begin{equation*}
  \widehat{f_\Sigma}(x_1,\cdots,x_n)=\Exp*{-\frac{1}{2} \sum_{i,j=1}^n x_i x_j \sigma_{i,j}}, \qquad (x_{1}, \dots, x_{n}) \in \mathbb{R}^{n}.
\end{equation*}
Differentiating this formula, we obtain for $i\neq j$:
\begin{equation*}
  \widehat{\partial_{\sigma_{i,j}} f_\Sigma} = \partial_{\sigma_{i,j}} \widehat{f_\Sigma}=-x_i x_j \widehat{f_\Sigma}=\widehat{\partial_{x_i}\partial_{x_j}f_{\Sigma}}.
\end{equation*}
Since the Fourier transform is into, we get that $\partial_{\sigma_{i,j}} f_{\Sigma} = \partial_{x_{i}} \partial_{x_{j}} f_{\Sigma}$, from which \cref{ipp} readily follows.
\end{proof}

In order to highlight the line of reasoning we use in the proof of \cref{t:dim3}, and for the sake of completeness, let us first give a short proof of \cref{t:known} \cref{t:known:dim2}.
\begin{proof}[Proof of {\cref{t:known} \cref{t:known:dim2}}]
  Fix $p_1$ and $p_2 \in 2 \mathbb{N}^{*}$.
  We want to prove that if $(X_1,X_2)$ is a centered real Gaussian vector then $\Esp{X_1^{p_1} X_2^{p_2}}\geq \Esp{X_1^{p_1}} \Esp{X_2^{p_2}}$, with equality if and only if  $X_1$ and $X_2$ are independent.
  By homogeneity it is enough to prove the statement when $X_1$ and $X_2$ are normalized, in which case the right term of the inequality depends only on $p_1$ and $p_2$.
  Setting, for $t$ in $[-1,1]$, $\Phi(t)=\Esp{X_1^{p_1}X_2^{p_2}}$ where $(X_1,X_2)$ is the Gaussian vector associated to
\begin{equation*}
  \Sigma=
  \begin{pmatrix}
    1 & t\\ t & 1
  \end{pmatrix},
\end{equation*}
the claim is equivalent to the fact that $\Phi$ reaches its unique minimum at $t=0$.
From \cref{derivative}, $\Phi'(t)=p_1 p_2 \Esp{X_1^{p_1-1}X_2^{p_2-1}}$ and $\Phi''(t)=p_1(p_1-1) p_2(p_2-1) \Esp{X_1^{p_1-2}X_2^{p_2-2}}$.
In particular, $\Phi''>0$ and $\Phi'(0)=0$.
Consequently, $0$ is a critical point of a strictly convex function, and thus it is the unique global minimizer of $\Phi$, from which the result follows.
\end{proof}

\cref{t:dim3} follows from the recursive argument below; the corresponding initialization is given by \cref{t:known} \cref{t:known:dim2}.
\begin{proposition}
  Let $p_{1}, p_{2}, p_{3} \in 2 \mathbb{N}^{*}$.
  If $\mathbf{GPI}_{3}(p_{1} - 2, p_{2}, p_{3})$, then $\mathbf{GPI}_{3}(p_{1}, p_{2}, p_{3})$.
\end{proposition}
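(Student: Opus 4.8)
The plan is to run the optimization argument used for \cref{t:known} \cref{t:known:dim2}, but now over the full domain of three‑dimensional correlation matrices, with the hypothesis $\mathbf{GPI}_3(p_1-2,p_2,p_3)$ used to pin down the minimum. First I would reduce, by homogeneity, to normalized $X_1,X_2,X_3$, so that $\Sigma=\Sigma(r,s,t)$ is a correlation matrix with $r=\sigma_{1,2}$, $s=\sigma_{1,3}$, $t=\sigma_{2,3}$, and $(r,s,t)$ ranges over the \emph{elliptope} $\mathcal E=\set{(r,s,t)\in[-1,1]^{3}:\Sigma(r,s,t)\succeq 0}$, a compact convex set with the origin in its interior. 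By Wick's formula $\Phi(r,s,t):=\Esp{X_1^{p_1}X_2^{p_2}X_3^{p_3}}$ is a polynomial, hence attains its minimum over $\mathcal E$ at some $P=(r_0,s_0,t_0)$. Since $\Phi(0,0,0)=\Esp{X_1^{p_1}}\Esp{X_2^{p_2}}\Esp{X_3^{p_3}}$ and $(0,0,0)\in\mathcal E$, it is enough to show that this minimum equals $\Phi(0,0,0)$ and is attained only at the origin.

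The engine is the identity obtained from \cref{ipp} with $i=1$ applied to the monomial $x\mapsto x_1^{p_1-1}x_2^{p_2}x_3^{p_3}$, after rewriting the mixed second moments via \cref{derivative}: for every $(r,s,t)\in\mathcal E$,
\begin{equation*}
\Phi(r,s,t)=(p_1-1)\,\Esp{X_1^{p_1-2}X_2^{p_2}X_3^{p_3}}+\frac{r}{p_1}\,\partial_r\Phi(r,s,t)+\frac{s}{p_1}\,\partial_s\Phi(r,s,t).
\end{equation*}
The key point is that the induction hypothesis $\mathbf{GPI}_3(p_1-2,p_2,p_3)$ gives $(p_1-1)\Esp{X_1^{p_1-2}X_2^{p_2}X_3^{p_3}}\geq(p_1-1)\Esp{X_1^{p_1-2}}\Esp{X_2^{p_2}}\Esp{X_3^{p_3}}=\Phi(0,0,0)$, with equality exactly when $X_1,X_2,X_3$ are independent. (When $p_1=2$ the leading moment is $\Esp{X_2^{p_2}X_3^{p_3}}$ and the hypothesis is \cref{t:known} \cref{t:known:dim2}; a short additional computation using the critical point relations below then upgrades $X_2\perp X_3$ to full independence.)

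If $P$ lies in the interior of $\mathcal E$ then $\nabla\Phi(P)=0$, the identity reduces to $\Phi(P)=(p_1-1)\Esp{X_1^{p_1-2}X_2^{p_2}X_3^{p_3}}$ evaluated at $P$, and $\Phi(0,0,0)\geq\Phi(P)\geq\Phi(0,0,0)$ forces equality in $\mathbf{GPI}_3(p_1-2,p_2,p_3)$, hence independence at $P$, hence $P=(0,0,0)$; in particular $\min_{\mathcal E}\Phi=\Phi(0,0,0)$ and no interior point other than the origin is a minimizer. It remains to prove the strict inequality $\Phi>\Phi(0,0,0)$ on $\partial\mathcal E=\set{\det\Sigma=0}$, which together with the previous step yields that the minimum is $\Phi(0,0,0)$ and is attained only at the origin. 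A singular $3\times 3$ correlation matrix has entries $\sigma_{i,j}=\cos(\theta_i-\theta_j)$ and is realized by $X_i=\cos\theta_i\,G_1+\sin\theta_i\,G_2$; writing $(G_1,G_2)=R(\cos\varphi,\sin\varphi)$ with $R\geq0$ and $\varphi$ uniform on $[0,2\pi)$ independent, and using that the average of $\cos^{p}(\varphi-\theta)$ over $\varphi$ does not depend on $\theta$,
\begin{equation*}
\Phi=\Esp{R^{\,p_1+p_2+p_3}}\cdot\frac{1}{2\pi}\int_{0}^{2\pi}\prod_{i=1}^{3}\cos^{p_i}(\varphi-\theta_i)\,\mathrm d\varphi,\qquad\Phi(0,0,0)=\prod_{i=1}^{3}\Esp{R^{\,p_i}}\ \prod_{i=1}^{3}\frac{1}{2\pi}\int_{0}^{2\pi}\cos^{p_i}\varphi\,\mathrm d\varphi.
\end{equation*}
Since $\Esp{R^{2k}}=2^{k}k!$, dividing turns the boundary claim into the trigonometric inequality
\begin{equation*}
\binom{(p_1+p_2+p_3)/2}{p_1/2,\ p_2/2,\ p_3/2}\;\frac{1}{2\pi}\int_{0}^{2\pi}\prod_{i=1}^{3}\cos^{p_i}(\varphi-\theta_i)\,\mathrm d\varphi\ \geq\ \prod_{i=1}^{3}\frac{1}{2\pi}\int_{0}^{2\pi}\cos^{p_i}\varphi\,\mathrm d\varphi,
\end{equation*}
strict whenever $p_1,p_2,p_3\geq 2$.

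Establishing this last trigonometric inequality is, I expect, the main obstacle; equivalently, one may instead run the Lagrange/Karush--Kuhn--Tucker analysis at a boundary minimizer, where $\nabla\Phi(P)$ is a nonnegative multiple of $\nabla(\det\Sigma)(P)$ and, $\ker\Sigma$ being spanned by a single vector $v$, one can express $\Phi(P)-(p_1-1)\Esp{X_1^{p_1-2}X_2^{p_2}X_3^{p_3}}$ purely in terms of $v$ and finish by a Gaussian moment computation on the rank‑two locus. Everything else is formal: it rests only on the key identity, the induction hypothesis $\mathbf{GPI}_3(p_1-2,p_2,p_3)$, and the case $n=2$.
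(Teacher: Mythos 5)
Your setup and your treatment of an interior minimizer are exactly the paper's: same reduction to the elliptope, same key identity obtained by combining \cref{ipp} with \cref{derivative} at a critical point, same use of the induction hypothesis to force equality in $\mathbf{GPI}_{3}(p_{1}-2,p_{2},p_{3})$ and hence $P=(0,0,0)$ (and your parenthetical about the degenerate case $p_{1}=2$ is the right fix, handled in the paper by symmetry in the exponents and the pairwise-independence argument). The problem is the boundary. You reduce the claim $\Phi>\Phi(0,0,0)$ on $\set{\det\Sigma=0}$ to a trigonometric inequality
\begin{equation*}
\binom{(p_1+p_2+p_3)/2}{p_1/2,\ p_2/2,\ p_3/2}\;\frac{1}{2\pi}\int_{0}^{2\pi}\prod_{i=1}^{3}\cos^{p_i}(\varphi-\theta_i)\,\mathrm d\varphi\ \geq\ \prod_{i=1}^{3}\frac{1}{2\pi}\int_{0}^{2\pi}\cos^{p_i}\varphi\,\mathrm d\varphi
\end{equation*}
(the reduction itself, via $X_i=R\cos(\varphi-\theta_i)$ and $\Esp{R^{2k}}=2^{k}k!$, is correct), but you do not prove it and you yourself flag it as ``the main obstacle.'' That inequality is nothing other than $\mathbf{GPI}_{3}(p_1,p_2,p_3)$ restricted to rank-two covariances, so as stated your argument is circular on the boundary stratum: the hard part has been reformulated, not resolved. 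This is a genuine gap.

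The paper closes it without ever proving a pointwise lower bound on the whole boundary. It splits the boundary into the corner stratum where some off-diagonal entry has modulus $1$ (there two variables coincide up to sign, and the strict inequality $\Esp{X_2^{p_2+p_3}}>\Esp{X_2^{p_2}}\Esp{X_2^{p_3}}$ from \cref{t:known} \cref{t:known:dim2} shows such a point cannot be a minimizer) and the stratum $\det\Sigma_0=0$ with $\abs{a},\abs{b},\abs{c}<1$. On the latter it runs exactly the Lagrange analysis you gesture at in your last sentence, but the decisive points you are missing are: (i) $\vec\nabla(\det)(\Sigma_0)=(\alpha_1\alpha_2,\alpha_1\alpha_3,\alpha_2\alpha_3)$ for a nonzero $\alpha\in\ker(\Sigma_0)$ (via the adjugate and Jacobi's formula), and (ii) the linear relation $\alpha_1X_1+\alpha_2X_2+\alpha_3X_3=0$ a.s.\ combined with \cref{Phiderivative} gives $p_1\alpha_1\partial_c\Phi+p_2\alpha_2\partial_b\Phi+p_3\alpha_3\partial_a\Phi=p_1p_2p_3\Esp{U(\alpha_1X_1+\alpha_2X_2+\alpha_3X_3)}=0$, whence $(p_1+p_2+p_3)k\,\alpha_1\alpha_2\alpha_3=0$; either $k=0$ (so $\Sigma_0$ is a critical point and the interior argument forces $\Sigma_0=I_3$, contradicting $\det\Sigma_0=0$) or some $\alpha_i=0$ (forcing two variables proportional, contradicting $\abs{a},\abs{b},\abs{c}<1$). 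If you want to salvage your write-up, replace the unproven trigonometric inequality by this two-stratum contradiction argument; the rest of what you wrote then assembles into a complete proof.
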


\begin{proof}

Let $\mathcal{C}$ be the set of real symmetric positive matrices of size $3$ with $1$ on the diagonal, namely
\begin{equation*}
  \mathcal{C}= \set*{ \Sigma=\begin{pmatrix} 1 & a & b \\ a & 1 & c \\ c & b & 1 \end{pmatrix} : |a|,|b|,|c|\leq 1,\, \det(\Sigma)\geq 0   }.
\end{equation*}
We identify $\mathcal{C}$ with a compact subset of $\mathbb{R}^{3}$. With this notation, $\mathbf{GPI}_{3}(p_{1}, p_{2}, p_{3})$ turns out to be equivalent to the fact that $\Phi$ attains its unique minimum on $\mathcal{C}$ at $I_{3}$.
Since $\mathcal{C}$ is compact and $\Phi$ continuous, $\Phi$ has a global minimum at some possibly non-unique 
\begin{equation*}
  \Sigma_0 = \begin{pmatrix} 1 & a & b \\ a & 1 & c \\ c & b & 1 \end{pmatrix} \in \mathcal{C}.
\end{equation*}
We prove that $\Sigma_0=I_3$.
We split the argument in three cases, depending on the location of $\Sigma_{0}$ in $\mathcal{C}$.

\paragraph{Case 1.}
We assume that $\Sigma_{0}$ is in the interior on $\mathcal{C}$.
 This means that $\det(\Sigma_{0}) > 0$ and $\abs{a}, \abs{b}, \abs{c} < 1$.
In this case, $\Sigma_0$ is a critical point of $\Phi$.
Write
\begin{equation*}
  U = X_{1}^{p_{1}-1}X_{2}^{p_{2}-1}X_{3}^{p_{3}-1}.
\end{equation*}
According to \cref{derivative}
\begin{equation}\label{Phiderivative}
  \begin{cases}
    \partial_{a} \Phi(\Sigma_0)=p_1 p_2 \Esp{X_{3} U},\\
    \partial_{b} \Phi(\Sigma_0)=p_1 p_3 \Esp{X_{2} U},\\
    \partial_{c} \Phi(\Sigma_0)=p_2 p_3 \Esp{X_{1} U}.
\end{cases}
\end{equation}
Thus, $\Esp{X_{1} U}=\Esp{X_{2} U}=\Esp{X_{3} U}=0$.
On the other hand, let
\begin{equation*}
  V = X_{1}^{p_{1} -1} X_{2}^{p_{2}} X_{3}^{p_{3}}.
\end{equation*}
Thus, by \cref{ipp} and the fact that the derivatives vanish,
\begin{equation*}
  \begin{split}
    \Phi(\Sigma_0)&=\Esp{X_{1} V}
                \\&=(p_1-1)\Esp{X_1^{p_1-2}X_2^{p_2}X_3^{p_3}}+p_2a \Esp{X_{3} U}+ p_3 b \Esp{X_{2} U}
             \\&=(p_1-1)\Esp{X_1^{p_1-2}X_2^{p_2}X_3^{p_3}}
  \end{split}
\end{equation*}
In view of $\mathbf{GPI}_{3}(p_{1}-2, p_{2}, p_{3})$, we thus get
\begin{equation*}
  \Phi(\Sigma_{0}) \geq (p_1-1)\Esp{X_1^{p_1-2}}\Esp{X_2^{p_2}}\Esp{X_3^{p_3}}=\Esp{X_1^{p_1}}\Esp{X_2^{p_2}}\Esp{X_3^{p_3}}=\Phi(I_3).
\end{equation*}
Since $\Sigma_0$ is a minimizer, we actually have that $\Phi(\Sigma_{0}) = \Phi(I_{3})$.
In particular, this means that we are in the equality case of $\mathbf{GPI}_{3}(p_{1}-2, p_{2}, p_{3})$.
If $p_{1}  > 2$, this shows mediately that $\Sigma_{0} = I_{3}$.
Similarly, if $p_{2} > 2$ or $p_{3} > 2$, we conclude in the same way.
If $p_{1} = p_{2} = p_{3} = 2$, using \cref{t:known} \cref{t:known:dim2}, we deduce that the components of $(X_{1}, X_{2}, X_{3})$ are pairwise independent.
Since the vector is Gaussian, the conclusion follows.

\paragraph{Case 2.}
We assume that $|a|, |b|, |c|<1$ and $\det(\Sigma_0)=0$.

$\Sigma_0$ is a priori not a critical point of $\Phi$.
Since $\Sigma_0$ is a global minimizer on $\mathcal{C}$ it is also a minimizer of $\Phi$ on the surface
\begin{equation*}
  S= \set*{\Sigma=\begin{pmatrix} 1 & a & b \\ a & 1 & c \\ c & b & 1 \end{pmatrix} : \det(\Sigma)=0 }.
\end{equation*}
By the Lagrange multiplier theorem, we conclude that $\vec{\nabla}\Phi(\Sigma_0)$ and $\vec{\nabla}(\det)(\Sigma_0)$ are colinear (where $\vec{\nabla}=(\frac{\partial}{\partial a},\frac{\partial}{\partial b},\frac{\partial}{\partial c})$).
We have already computed $\vec{\nabla}\Phi(\Sigma_0)$ in \cref{Phiderivative}, and we have:

\begin{lemma} 
	$\vec{\nabla}(\det)(\Sigma_0)=(\alpha_1\alpha_2,\alpha_1\alpha_3,\alpha_2\alpha_3)$, where $(\alpha_1, \alpha_2, \alpha_3)$ is some non zero vector of $\ker(\Sigma_0)$.
\end{lemma}

\begin{proof}
  Write $A = 2 \mathrm{adj}(\Sigma_{0})$ where $\mathrm{adj}$ stands for the adjugate matrix
  Since $\det(\Sigma_0)=0$, $\mathrm{rank}(\Sigma_0)\leq 2$, and since $|a|, |b|, |c|<1$, two columns of $\Sigma_0$ cannot be proportional so $\mathrm{rank}(\Sigma_0)=2$.
  This implies that $A$ has rank $1$, thus $A = \alpha^{T} \alpha$ where $\alpha=(\alpha_1,\alpha_2,\alpha_3) \in \ker(\Sigma_0) \setminus \{0\}$.
  By Jacobi's formula, we have that $\vec{\nabla}(\det)(\Sigma_0)=(A_{1,2}, A_{1,3}, A_{2,3})$.
\end{proof}

 We deduce that there exists a real number $k$ such that
\begin{equation}\label{multipliers}
  \begin{cases}
    \partial_{a} \Phi(\Sigma_0)=k\alpha_1\alpha_2,\\
    \partial_{b} \Phi(\Sigma_0)=k\alpha_1\alpha_3,\\
    \partial_{c} \Phi(\Sigma_0)=k\alpha_2\alpha_3.
\end{cases} .\end{equation}

Since $(\alpha_1,\alpha_2,\alpha_3)$ belongs to $\ker(\Sigma_0)$, $\alpha_1X_1+\alpha_2 X_2+\alpha_3 X_3=0$ almost surely.
From \cref{Phiderivative},
\begin{equation*}
  p_1\alpha_1\frac{\partial \Phi}{\partial c}(\Sigma_0)+p_2\alpha_2\frac{\partial \Phi}{\partial b}(\Sigma_0)+p_3\alpha_3\frac{\partial \Phi}{\partial a}(\Sigma_0)=p_1p_2p_3\Esp*{U\paren*{\alpha_1X_1+\alpha_2 X_2+\alpha_3 X_3}}=0.
\end{equation*}
Thus by reporting in \cref{multipliers},
\begin{equation*}
  0=(p_1+p_2+p_3)k\alpha_1\alpha_2\alpha_3.
\end{equation*}
If $k=0$, then \cref{multipliers} gives that $\Sigma_0$ is a critical point of $\Phi$, and as in Case 1 we obtain that $\Sigma_0=I_3$, which contradicts $\det(\Sigma_0)=0$.
If one of the $\alpha_i$ is zero, say $\alpha_1$, then  $\alpha_2X_2+\alpha_3X_3=0$, so $X_3$ and $X_2$ are proportional, and since they are normalized, $X_3=\pm X_2$, which contradicts $|c|<1$.
Hence, Case 2 cannot happen.

\paragraph{Case 3.}
We assume that $\{|a|,|b|,|c|\}\cap\{1\}\neq \emptyset$. Say for example that $|c|=1$.
That implies that $X_3=\pm X_2$ and so by the two dimensional case \cref{t:known} \cref{t:known:dim2},
\begin{equation*}
  \Phi(\Sigma_0)=\Esp{{X_1}^{p_1}{X_2}^{p_2+p_3}}\geq \Esp{{X_1}^{p_1}}\Esp{{X_2}^{p_2+p_3}} > \Esp{{X_1}^{p_1}}\Esp{{X_2}^{p_2}}\Esp{{X_2}^{p_3}}=\Phi(I_3).
\end{equation*}
In particular $\Sigma_0$ is not a minimizer.
It is a contradiction, and Case 3 cannot either happen.

\paragraph{Conclusion.}
We obtain that the only minimizer of $\Phi$ is $I_3$ which concludes the proof.
\end{proof}

\printbibliography%

\end{document}